\numberwithin{equation}{section}
\numberwithin{figure}{section}
\theoremstyle{plain}
\newtheorem{thm}{Theorem}
  \theoremstyle{plain}
  \newtheorem{lem}[thm]{Lemma}
\begin{document}

\title{Differential equations associated with $\lambda$-Changhee polynomials }

\author{Taekyun Kim}
\address{Department of Mathematics, Kwangwoon University, Seoul 139-701, Republic
of Korea}
\email{tkkim@kw.ac.kr}

\author{Dae San Kim}
\address{Department of Mathematics, Sogang University, Seoul 121-742, Republic
of Korea}
\email{dskim@sogang.ac.kr}

\begin{abstract}
In this paper, we study linear differential equations arising from
$\lambda$-Changhee polynomials (or called degenerate Changhee polynomials)
and give some explicit and new identities for the $\lambda$-Changhee
polynomials associated with linear differential equations.
\end{abstract}

\thanks{\noindent {\footnotesize{ \it 2010 Mathematics Subject
Classification } : 05A19, 11B37, 34A30}} \medskip
\thanks{\footnotesize{ \bf Key words and phrases}:
$\lambda$-Changhee polynomials, differential equations}

\maketitle
\global\long\def\relphantom#1{\mathrel{\phantom{{#1}}}}
\global\long\def\Ch{\mathrm{Ch}}

\section{Introduction}

For $N\in\mathbb{N}$, we define the generalized harmonic numbers as
follows:

\begin{equation}
H_{N,0}=1,\quad\text{ for all }N,\label{eq:1}
\end{equation}

\begin{equation}
H_{N,1}=H_{N}=1+\frac{1}{2}+\cdots+\frac{1}{N},\quad\left(\text{see \cite{key-5}}\right),\label{eq:2}
\end{equation}

\begin{equation}
H_{N,j}=\frac{H_{N-1,j-1}}{N}+\frac{H_{N-2,j-1}}{N-1}+\cdots+\frac{H_{j-1,j-1}}{j},\quad\left(2\le j\le N\right).\label{eq:3}
\end{equation}

For $k\in\mathbb{N}$ and $N,j\in\mathbb{N}\cup\left\{ 0\right\} $,
we define the generalized Changhee power sums $S_{k,j}\left(N\right)$
as follows:
\begin{align}
S_{k,0}\left(N\right) & =\left(N+1\right)^{k},\label{eq:4}\\
S_{k,j}\left(N\right) & =\sum_{l=0}^{N}S_{k,j-1}\left(l\right),\quad\left(j\ge1\right),\quad\left(\text{see \cite{key-5,key-6}}\right).\label{eq:5}
\end{align}

In particular, for $k=1$, we also define $S_{1,-1}\left(N\right)=1$.

As is well known, the Euler polynomials are defined by the generating
function
\begin{equation}
\frac{2}{e^{t}+1}e^{xt}=\sum_{n=0}^{\infty}E_{n}\left(x\right)\frac{t^{n}}{n!},\quad\left(\text{see \cite{key-1,key-9,key-10,key-11}}\right).\label{eq:6}
\end{equation}

With the viewpoint of deformed Euler polynomials, the Changhee polynomials
are defined by the generating function
\begin{equation}
\frac{2}{t+2}\left(t+1\right)^{x}=\sum_{n=0}^{\infty}\Ch_{n}\left(x\right)\frac{t^{n}}{n!},\quad\left(\text{see \cite{key-2,key-3,key-8}}\right).\label{eq:7}
\end{equation}

From (\ref{eq:6}) and (\ref{eq:7}), we note that
\begin{align}
 & \sum_{n=0}^{\infty}E_{n}\left(x\right)\frac{t^{n}}{n!}\label{eq:8}\\
 & =\sum_{m=0}^{\infty}\Ch_{m}\left(x\right)\frac{1}{m!}\left(e^{t}-1\right)^{m}\nonumber \\
 & =\sum_{n=0}^{\infty}\left(\sum_{m=0}^{n}\Ch_{m}\left(x\right)S_{2}\left(n,m\right)\right)\frac{t^{n}}{n!},\nonumber
\end{align}
where $S_{2}\left(n,m\right)$ are the Stirling numbers of the second
kind. Thus, by (\ref{eq:8}), we get
\begin{equation}
E_{n}\left(x\right)=\sum_{m=0}^{n}\Ch_{m}\left(x\right)S_{2}\left(n,m\right)\quad\left(n\ge0\right).\label{eq:9}
\end{equation}

The Stirling numbers of the first kind $S_{1}\left(n,l\right)$ appear in the expansion of the falling factorial
\begin{align}
\left(x\right)_{0} & =1,\quad\left(x\right)_{n}=x\left(x-1\right)\cdots\left(x-n+1\right)\label{eq:10}\\
 & =\sum_{l=0}^{n}S_{1}\left(n,l\right)x^{l},\quad\left(n\ge1\right).\nonumber
\end{align}

From (\ref{eq:10}), we note that the generating function of the Stirling
numbers of the first kind is given by
\begin{equation}
\left(\log\left(1+t\right)\right)^{n}=n!\sum_{m=n}^{\infty}S_{1}\left(m,n\right)\frac{t^{m}}{m!},\quad\left(\text{see \cite{key-4,key-7,key-12}}\right).\label{eq:11}
\end{equation}

By (\ref{eq:2}), we easily get
\begin{align}
 & \sum_{n=0}^{\infty}\Ch_{n}\left(x\right)\frac{t^{n}}{n!}\label{eq:12}\\
 & =\sum_{m=0}^{\infty}E_{m}\left(x\right)\frac{1}{m!}\left(\log\left(1+t\right)\right)^{m}\nonumber \\
 & =\sum_{n=0}^{\infty}\left(\sum_{m=0}^{n}S_{1}\left(n,m\right)E_{m}\left(x\right)\right)\frac{t^{m}}{m!}.\nonumber
\end{align}

Thus, by (\ref{eq:12}), we have
\[
\Ch_{n}\left(x\right)=\sum_{m=0}^{n}S_{1}\left(n,m\right)E_{m}\left(x\right),\quad\left(n\ge0\right),\quad\left(\text{see \cite{key-16,key-17}}\right).
\]

Recently, $\lambda$-Changhee polynomials (or called degenerate Changhee
polynomials) are defined by the generating function
\begin{align}
 & \frac{2\lambda}{2\lambda+\log\left(1+\lambda t\right)}\left(1+\frac{\log\left(1+\lambda t\right)}{\lambda}\right)^{x}\label{eq:14}\\
 & =\sum_{n=0}^{\infty}\Ch_{n,\lambda}\left(x\right)\frac{t^{n}}{n!},\quad\left(\text{see \cite{key-13}}\right).\nonumber
\end{align}

When $x=0$, $\Ch_{n,\lambda}=\Ch_{n,\lambda}\left(0\right)$ are
called $\lambda$-Changhee numbers (or called degenerate Changhee
numbers).

In \cite{key-5}, Kim-Kim gave some explicit and new identities for
the Bernoulli numbers of the second kind arising from nonlinear differential
equations. It is known that some interesting identities and properties
of the Frobenius-Euler polynomials are also derived from the non-linear
differential equations (see \cite{key-6,key-11}).

Recently, several authors have studied some interesting properties
for the Changhee numbers and polynomials (see \cite{key-1,key-2,key-3,key-4,key-5,key-6,key-7,key-8,key-9,key-10,key-11,key-12,key-13,key-14,key-15,key-16,key-17,key-18, key-19}).

In this paper, we develop some new method for obtaining identities
related to $\lambda$-Changhee polynomials arising from linear differential
equations. From our study, we derive some explicit and new identities
for the $\lambda$-Changhee polynomials.

\section{Some identities for the $\lambda$-Changhee polynomials arising from
linear differential equations}

First, we introduce lemma for the generalized Changhee power sum $S_{k,j}\left(N\right)$.
\begin{lem}
\label{lem:1} For $2\le r\le N$ and $1\le i\le r-1$, we have
\begin{equation}
S_{1,i-1}\left(r-1-i\right)+S_{1,i-2}\left(r-i\right)=S_{1,i-1}\left(r-i\right).\tag{{A}}\label{eq:lem-1-a}
\end{equation}
\end{lem}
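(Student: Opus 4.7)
The plan is to read off the identity (\ref{eq:lem-1-a}) directly from the defining recursion (\ref{eq:5}). For the main range $i\geq 2$, the recursion with $k=1$ and $j=i-1$ gives
\[
S_{1,i-1}(r-i)=\sum_{l=0}^{r-i}S_{1,i-2}(l).
\]
Since the hypothesis $1\leq i\leq r-1$ forces $r-1-i\geq 0$, I can split off the top term of this sum, writing it as
\[
\sum_{l=0}^{r-1-i}S_{1,i-2}(l)+S_{1,i-2}(r-i),
\]
and then recognize the first piece as $S_{1,i-1}(r-1-i)$, again by (\ref{eq:5}). This yields (\ref{eq:lem-1-a}) immediately.

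The only subtlety is the boundary case $i=1$, since the recursion (\ref{eq:5}) is stated for $j\geq 1$ and here one of the terms involves $S_{1,-1}$. I would handle this as a separate short verification, using the explicit conventions supplied right after (\ref{eq:5}): $S_{1,-1}(N)=1$ and $S_{1,0}(N)=N+1$. Then the identity reduces to $(r-1)+1=r$, which is clear.

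I expect no real obstacle; the lemma is essentially a telescoping remark saying that accumulating one extra summand in the definition of $S_{1,i-1}$ increases the index by one. The only thing to watch is consistency of the index ranges (so that all arguments $r-i$, $r-1-i$, and the iterated index $i-2$ remain nonnegative), which the stated constraints $2\leq r\leq N$ and $1\leq i\leq r-1$ guarantee after the $i=1$ case is peeled off.
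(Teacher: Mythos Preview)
Your argument is correct and is essentially identical to the paper's own proof: both expand $S_{1,i-1}(r-i)$ via the recursion (\ref{eq:5}), peel off the top summand $S_{1,i-2}(r-i)$, and identify the remaining sum as $S_{1,i-1}(r-1-i)$. You are in fact slightly more careful than the paper in separating out the boundary case $i=1$, where one term involves $S_{1,-1}$ and the convention stated after (\ref{eq:5}) is needed; the paper's proof glosses over this point.
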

\begin{proof}
From (\ref{eq:4}) and (\ref{eq:5}), we have
\begin{align}
S_{1,i-1}\left(r-1-i\right)+S_{1,i-2}\left(r-i\right) & =\sum_{l=0}^{r-1-i}S_{1,i-2}\left(l\right)+S_{1,i-2}\left(r-i\right)\label{eq:16}\\
 & =\sum_{l=0}^{r-i}S_{1,i-2}\left(l\right)\nonumber \\
 & =S_{1,i-1}\left(r-i\right).\nonumber \qedhere
\end{align}
\end{proof}
Let
\begin{align}
F & =F\left(t;x,\lambda\right)\label{eq:17}\\
 & =\frac{2\lambda}{2\lambda+\log\left(1+\lambda t\right)}\left(1+\lambda^{-1}\log\left(1+\lambda t\right)\right)^{x}.\nonumber
\end{align}

Then, by (\ref{eq:17}), we get
\begin{align}
 & F^{\left(1\right)}\label{eq:18}\\
 & =\frac{d}{dt}F\left(t;x,\lambda\right)\nonumber \\
 & =\lambda\left(1+\lambda t\right)^{-1}\left(-\left(2\lambda+\log\left(1+\lambda t\right)\right)^{-1}+x\left(\lambda+\log\left(1+\lambda t\right)\right)^{-1}\right)F.\nonumber
\end{align}
\begin{align}
 & F^{\left(2\right)}\label{eq:19}\\
 & =\frac{dF^{\left(1\right)}}{dt}\nonumber \\
 & =\lambda^{2}\left(1+\lambda t\right)^{-2}\left\{ \left(2\lambda+\log\left(1+\lambda t\right)\right)^{-1}-x\left(\lambda+\log\left(1+\lambda t\right)\right)^{-1}\right.\nonumber \\
 & \relphantom =+2\left(2\lambda+\log\left(1+\lambda t\right)\right)^{-2}-2x\left(2\lambda+\log\left(1+\lambda t\right)\right)^{-1}\left(\lambda+\log\left(1+\lambda t\right)\right)^{-1}\nonumber \\
 & \relphantom =\left.+\left(x\right)_{2}\left(\lambda+\log\left(1+\lambda t\right)\right)^{-2}\right\} F.\nonumber
\end{align}

So, we are led to put
\begin{align}
 & F^{\left(N\right)}\label{eq:20}\\
 & =\left(\frac{d}{dt}\right)^{N}F\left(t;x,\lambda\right)\nonumber \\
 & =\lambda^{N}\left(1+\lambda t\right)^{-N}\left(\sum_{1\le i+j\le N}a_{i,j}^{\left(\lambda\right)}\left(N,x\right)\left(2\lambda+\log\left(1+\lambda t\right)\right)^{-i}\left(\lambda+\log\left(1+\lambda t\right)\right)^{-j}\right)F,\nonumber
\end{align}
where $N=1,2,\dots$, and the sum is over all nonnegative integers
$i,j$ with $1\le i+j\le N$.

On the one hand, by (\ref{eq:20}), we get
\begin{align}
 &\relphantom{=} F^{\left(N+1\right)}\label{eq:21}\\
 & =\frac{dF^{\left(N\right)}}{dt}\nonumber \\
 & =\lambda^{N+1}\left(1+\lambda t\right)^{-\left(N+1\right)}\nonumber \\
 & \relphantom =\times\left\{ \left(-N\right)\sum_{\substack{1\le i+j\le N\\i\ge0,j\ge0}}a_{i,j}^{\left(\lambda\right)}\left(N,x\right)\left(2\lambda+\log\left(1+\lambda t\right)\right)^{-i}\left(\lambda+\log\left(1+\lambda t\right)\right)^{-j}\right.\nonumber \\
 & \relphantom =-\sum_{\substack{2\le i+j\le N+1\\
i\ge1,\,j\ge0
}
}ia_{i-1,j}^{\left(\lambda\right)}\left(N,x\right)\left(2\lambda+\log\left(1+\lambda t\right)\right)^{-i}\left(\lambda+\log\left(1+\lambda t\right)\right)^{-j}\nonumber \\
 & \relphantom =\left.+\sum_{\substack{2\le i+j\le N+1\\
i\ge0,j\ge1
}
}\left(x-j+1\right)a_{i,j-1}^{\left(\lambda\right)}\left(N,x\right)\left(2\lambda+\log\left(1+\lambda t\right)\right)^{-i}\nonumber\right.\\
&\relphantom{=}\times\left.\left(\lambda+\log\left(1+\lambda t\right)\right)^{-j}\right\} F.\nonumber
\end{align}

On the other hand, by replacing $N$ by $N+1$ in (\ref{eq:20}),
we have
\begin{align}\label{eq:22}
F^{\left(N+1\right)} & =\lambda^{N+1}\left(1+\lambda t\right)^{-\left(N+1\right)} \\
 & \relphantom =\times\left(\sum_{\substack{1\le i+j\le N+1\\
i,j\ge0
}
}a_{i,j}^{\left(\lambda\right)}\left(N+1,x\right)\left(2\lambda+\log\left(1+\lambda t\right)\right)^{-i}\right.\nonumber\\
&\relphantom{===}\times\left.\left(\lambda+\log\left(1+\lambda t\right)\right)^{-j}\right)F.\nonumber\\
\end{align}

Let $i+j=r$. Then $1\le r\le N+1$. Comparing the terms with $r=1$,
we get
\begin{align}
a_{1,0}^{\left(\lambda\right)}\left(N+1,x\right) & =-Na_{1,0}^{\left(\lambda\right)}\left(N,x\right)\label{eq:23}\\
a_{0,1}^{\left(\lambda\right)}\left(N+1,x\right) & =-Na_{0,1}^{\left(\lambda\right)}\left(N,x\right).\nonumber
\end{align}

Comparing the terms with $i+j=r$ $\left(2\le r\le N\right)$,
\begin{align}
 & \sum_{i=0}^{r}a_{i,r-i}^{\left(\lambda\right)}\left(N+1,x\right)\left(2\lambda+\log\left(1+\lambda t\right)\right)^{-i}\left(\lambda+\log\left(1+\lambda t\right)\right)^{i-r}\label{eq:24}\\
 & =-N\sum_{i=0}^{r}a_{i,r-i}^{\left(\lambda\right)}\left(N,x\right)\left(2\lambda+\log\left(1+\lambda t\right)\right)^{-i}\left(\lambda+\log\left(1+\lambda t\right)\right)^{i-r}\nonumber \\
 & \relphantom =-\sum_{i=1}^{r}ia_{i-1,r-i}^{\left(\lambda\right)}\left(N,x\right)\left(2\lambda+\log\left(1+\lambda t\right)\right)^{-i}\left(\lambda+\log\left(1+\lambda t\right)\right)^{i-r}\nonumber \\
 & \relphantom =+\sum_{i=0}^{r-1}\left(x+i-r+1\right)a_{i,r-i-1}^{\left(\lambda\right)}\left(N,x\right)\left(2\lambda+\log\left(1+\lambda t\right)\right)^{-i}\left(\lambda+\log\left(1+\lambda t\right)\right)^{i-r}.\nonumber
\end{align}

Thus, by (\ref{eq:24}), we get
\begin{align}
a_{0,r}^{\left(\lambda\right)}\left(N+1,x\right) & =-Na_{0,r}^{\left(\lambda\right)}\left(N,x\right)+\left(x-r+1\right)a_{0,r-1}^{\left(\lambda\right)}\left(N,x\right),\label{eq:25}\\
a_{r,0}^{\left(\lambda\right)}\left(N+1,x\right) & =-Na_{r,0}^{\left(\lambda\right)}\left(N,x\right)-ra_{r-1,0}^{\left(\lambda\right)}\left(N,x\right),\label{eq:26}
\end{align}
and
\begin{align}\label{eq:27}
&\relphantom{=}a_{i,r-i}^{\left(\lambda\right)}\left(N+1,x\right)\\
&=-Na_{i,r-i}^{\left(\lambda\right)}\left(N,x\right)-ia_{i-1,r-i}^{\left(\lambda\right)}\left(N,x\right)\nonumber\\
&\relphantom{=}+\left(x+i-r+1\right)a_{i,r-i-1}^{\left(\lambda\right)}\left(N,x\right),\nonumber
\end{align}
where $1\le i\le r-1$.

Comparing the terms with $i+j=N+1$, we get
\begin{align}
 & \sum_{i=0}^{N+1}a_{i,N+1-i}^{\left(\lambda\right)}\left(N+1,x\right)\left(2\lambda+\log\left(1+\lambda t\right)\right)^{-i}\left(\lambda+\log\left(1+\lambda t\right)\right)^{i-\left(N+1\right)}\label{eq:28}\\
 & =-\sum_{i=1}^{N+1}ia_{i-1,N+1-i}^{\left(\lambda\right)}\left(N,x\right)\left(2\lambda+\log\left(1+\lambda t\right)\right)^{-i}\left(\lambda+\log\left(1+\lambda t\right)\right)^{i-\left(N+1\right)}\nonumber \\
 & \relphantom =+\sum_{i=0}^{N}\left(x+i-N\right)a_{i,N-i}^{\left(\lambda\right)}\left(N,x\right)\left(2\lambda+\log\left(1+\lambda t\right)\right)^{-i}\left(\lambda+\log\left(1+\lambda t\right)\right)^{i-\left(N+1\right)}.\nonumber
\end{align}

From (\ref{eq:28}), we note that
\begin{align}\label{eq:29}
&a_{0,N+1}^{\left(\lambda\right)}\left(N+1,x\right)=\left(x-N\right)a_{0,N}^{\left(\lambda\right)}\left(N,x\right),\\
&a_{N+1,0}^{\left(\lambda\right)}\left(N+1,x\right)=-\left(N+1\right)a_{N,0}^{\left(\lambda\right)}\left(N,x\right),\nonumber
\end{align}
and
\begin{equation}
a_{i,N+1-i}^{\left(\lambda\right)}\left(N+1,x\right)=-ia_{i-1,N+1-i}^{\left(\lambda\right)}\left(N,x\right)+\left(x+i-N\right)a_{i,N-i}^{\left(\lambda\right)}\left(N,x\right),\label{eq:30}
\end{equation}
where $1\le i\le N$.

From (\ref{eq:18}) and (\ref{eq:20}), we have
\begin{align}
 & \lambda\left(1+\lambda t\right)^{-1}\left(-\left(2\lambda+\log\left(1+\lambda t\right)\right)^{-1}+x\left(\lambda+\log\left(1+\lambda t\right)\right)^{-1}\right)F\label{eq:31}\\
 & =F^{\left(1\right)}\nonumber \\
 & =\lambda\left(1+\lambda t\right)^{-1}\left(a_{1,0}^{\left(\lambda\right)}\left(1,x\right)\left(2\lambda+\log\left(1+\lambda t\right)\right)^{-1}\right.\nonumber\\
 &\relphantom{=}\left.+a_{0,1}^{\left(\lambda\right)}\left(1,x\right)\left(\lambda+\log\left(1+\lambda t\right)\right)^{-1}\right)F.\nonumber
\end{align}

By comparing the coefficients on both sides of (\ref{eq:31}),
we have
\begin{equation}
a_{1,0}^{\left(\lambda\right)}\left(1,x\right)=-1,\quad a_{0,1}^{\left(\lambda\right)}\left(1,x\right)=x.\label{eq:32}
\end{equation}

From (\ref{eq:23}), we note that
\begin{equation}
a_{1,0}^{\left(\lambda\right)}\left(N+1,x\right)=\left(-1\right)^{N+1}N!,\quad a_{0,1}^{\left(\lambda\right)}\left(N+1,x\right)=\left(-1\right)^{N}N!x.\label{eq:33}
\end{equation}

By (\ref{eq:29}), we easily get
\begin{equation}
a_{N+1,0}^{\left(\lambda\right)}\left(N+1,x\right)=\left(-1\right)^{N+1}\left(N+1\right)!,\quad a_{0,N+1}^{\left(\lambda\right)}\left(N+1,x\right)=\left(x\right)_{N+1}.\label{eq:34}
\end{equation}

For $i=1$ in (\ref{eq:30}), we have
\begin{align}
 & a_{1,N}^{\left(\lambda\right)}\left(N+1,x\right)\label{eq:35}\\
 & =-a_{0,N}^{\left(\lambda\right)}\left(N,x\right)+\left(x+1-N\right)a_{1,N-1}^{\left(\lambda\right)}\left(N,x\right)\nonumber \\
 & =-\left(x\right)_{N}+\left(x+1-N\right)a_{1,N-1}^{\left(\lambda\right)}\left(N,x\right)\nonumber \\
 & =-\left(x\right)_{N}+\left(x+1-N\right)\left(-\left(x\right)_{N-1}+\left(x+2-N\right)a_{1,N-2}^{\left(\lambda\right)}\left(N-1,x\right)\right)\nonumber \\
 & =-2\left(x\right)_{N}+\left(x+1-N\right)\left(x+2-N\right)a_{1,N-2}^{\left(\lambda\right)}\left(N-1,x\right)\nonumber \\
 & \vdots\nonumber \\
 & =-N\left(x\right)_{N}+\left(x+1-N\right)\left(x+2-N\right)\cdots\left(x+N-N\right)a_{1,0}^{\left(\lambda\right)}\left(1,x\right)\nonumber \\
 & =-\left(N+1\right)\left(x\right)_{N}\nonumber \\
 & =-S_{1,0}\left(N\right)\left(x\right)_{N}.\nonumber
\end{align}

For $i=2$ in (\ref{eq:30}), we note that
\begin{align}
&\relphantom{=}a_{2,N-1}^{\left(\lambda\right)}\left(N+1,x\right) \label{eq:36}\\
& =-2a_{1,N-1}^{\left(\lambda\right)}\left(N,x\right)\nonumber\\
 & \relphantom{=} +\left(x+2-N\right)a_{2,N-2}^{\left(\lambda\right)}\left(N,x\right)\nonumber \\
 & =\left(-1\right)^{2}2N\left(x\right)_{N-1}\nonumber\\
 &\relphantom{=}+\left(x+2-N\right)\left\{ \left(-1\right)^{2}2\left(N-1\right)\left(x\right)_{N-2}+\left(x+3-N\right)a_{2,N-3}^{\left(\lambda\right)}\left(N-1,x\right)\right\} \nonumber \\
 & =\left(-1\right)^{2}2\left\{ N+\left(N-1\right)\right\} \left(x\right)_{N-1}+\left(x+2-N\right)\left(x+3-N\right)a_{2,N-3}^{\left(\lambda\right)}\left(N-1,x\right)\nonumber \\
 & \vdots\nonumber \\
 & =\left(-1\right)^{2}2\left\{ N+\left(N-1\right)+\left(N-2\right)+\cdots+2\right\} \left(x\right)_{N-1}\nonumber\\
 &\relphantom{=}+\left(x+2-N\right)\left(x+3-N\right)\cdots xa_{2,0}^{\left(\lambda\right)}\left(2,x\right)\nonumber \\
 & =\left(-1\right)^{2}2\left\{ N+\left(N-1\right)+\cdots+2+1\right\} \left(x\right)_{N-1}\nonumber \\
 & =\left(-1\right)^{2}2!S_{1,1}\left(N-1\right)\left(x\right)_{N-1}.\nonumber
\end{align}

Let $i=3$ in (\ref{eq:30}). Then we have
\begin{align}
 & a_{3,N-2}^{\left(\lambda\right)}\left(N+1,x\right)\label{eq:37}\\
 & =-3a_{2,N-2}^{\left(\lambda\right)}\left(N,x\right)+\left(x+3-N\right)a_{3,N-3}^{\left(\lambda\right)}\left(N,x\right)\nonumber \\
 & =\left(-1\right)^{3}3!S_{1,1}\left(N-2\right)\left(x\right)_{N-2}+\left(x+3-N\right)a_{3,N-3}^{\left(\lambda\right)}\left(N,x\right)\nonumber \\
 & \vdots\nonumber \\
 & =\left(-1\right)^{3}3!\left\{ S_{1,1}\left(N-2\right)+\cdots+S_{1,1}\left(1\right)\right\} \left(x\right)_{N-2}\nonumber\\
 &\relphantom{=}+\left(x+3-N\right)\left(x+4-N\right)\cdots xa_{3,0}^{\left(\lambda\right)}\left(3,x\right)\nonumber \\
 & =\left(-1\right)^{3}3!\left\{ S_{1,1}\left(N-2\right)+\cdots+S_{1,1}\left(1\right)+S_{1,1}\left(0\right)\right\} \left(x\right)_{N-2}\nonumber \\
 & =\left(-1\right)^{3}3!S_{1,2}\left(N-2\right)\left(x\right)_{N-2}.\nonumber
\end{align}

Continuing this process, we get
\begin{equation}
a_{i,N+1-i}^{\left(\lambda\right)}\left(N+1,x\right)=\left(-1\right)^{i}i!S_{1,i-1}\left(N-i+1\right)\left(x\right)_{N-i+1},\quad\left(1\le i\le N\right).\label{eq:38}
\end{equation}

Let $2\le r\le N$. Then, by (\ref{eq:25}), we get
\begin{align}
a_{0,r}^{\left(\lambda\right)}\left(N+1,x\right) & =\left(x-r+1\right)a_{0,r-1}^{\left(\lambda\right)}\left(N,x\right)-Na_{0,r}^{\left(\lambda\right)}\left(N,x\right)\label{eq:39}\\
 & =\left(x-r+1\right)a_{0,r-1}^{\left(\lambda\right)}\left(N,x\right)\nonumber\\
 &\relphantom{=}-N\left\{ \left(x-r+1\right)a_{0,r-1}^{\left(\lambda\right)}\left(N-1,x\right)-\left(N-1\right)a_{0,r}^{\left(\lambda\right)}\left(N-1,x\right)\right\} \nonumber \\
 & =\left(x-r+1\right)\left\{ a_{0,r-1}^{\left(\lambda\right)}\left(N,x\right)-Na_{0,r-1}^{\left(\lambda\right)}\left(N-1,x\right)\right\}\nonumber\\
 &\relphantom{=} +\left(-1\right)^{2}N\left(N-1\right)a_{0,r}^{\left(\lambda\right)}\left(N-1,x\right)\nonumber \\
 & \vdots\nonumber \\
 & =\left(x-r+1\right)\sum_{i=0}^{N-r}\left(-1\right)^{i}\left(N\right)_{i}a_{0,r-1}^{\left(\lambda\right)}\left(N-i,x\right)\nonumber\\
 &\relphantom{=}+\left(-1\right)^{N-r+1}N\left(N-1\right)\cdots ra_{0,r}^{\left(\lambda\right)}\left(r,x\right)\nonumber \\
 & =\left(x-r+1\right)\sum_{i=0}^{N-r+1}\left(-1\right)^{i}\left(N\right)_{i}a_{0,r-1}^{\left(\lambda\right)}\left(N-i,x\right).\nonumber
\end{align}

Now, we give an explicit expression for $a_{0,r}^{\left(\lambda\right)}\left(N+1,x\right)$
$\left(2\le r\le N\right)$.

For $r=2$ in (\ref{eq:39}), we have
\begin{align}
a_{0,2}^{\left(\lambda\right)}\left(N+1,x\right) & =\left(x-1\right)\sum_{i=0}^{N-1}\left(-1\right)^{i}\left(N\right)_{i}a_{0,1}^{\left(\lambda\right)}\left(N-i,x\right)\label{eq:40}\\
 & =\left(x\right)_{2}\left(-1\right)^{N-1}\sum_{i=0}^{N-1}\left(N\right)_{i}\left(N-i-1\right)!\nonumber \\
 & =\left(x\right)_{2}\left(-1\right)^{N-1}N!\sum_{i=0}^{N-1}\frac{1}{N-i}\nonumber \\
 & =\left(x\right)_{2}\left(-1\right)^{N-1}N!\sum_{i=1}^{N}\frac{1}{i}\nonumber \\
 & =\left(x\right)_{2}\left(-1\right)^{N-1}N!H_{N,1}.\nonumber
\end{align}

Let us consider $r=3$ in (\ref{eq:39}). From (\ref{eq:40}), we
note that
\begin{align}
a_{0,3}^{\left(\lambda\right)}\left(N+1,x\right) & =\left(x-2\right)\sum_{i=0}^{N-2}\left(-1\right)^{i}\left(N\right)_{i}a_{0,2}^{\left(\lambda\right)}\left(N-i,x\right)\label{eq:41}\\
 & =\left(x-2\right)\sum_{i=0}^{N-2}\left(-1\right)^{i}\left(N\right)_{i}\left(x\right)_{2}\left(-1\right)^{N-i-2}\left(N-i-1\right)!H_{N-i-1}\nonumber \\
 & =\left(x\right)_{3}\left(-1\right)^{N-2}N!\sum_{i=0}^{N-2}\frac{H_{N-i-1}}{N-i}\nonumber \\
 & =\left(x\right)_{3}\left(-1\right)^{N-2}N!H_{N,2}.\nonumber
\end{align}

For $r=4$ in (\ref{eq:39}), we have
\begin{align}
 & a_{0,4}^{\left(\lambda\right)}\left(N+1,x\right)\label{eq:42}\\
 & =\left(x-3\right)\sum_{i=0}^{N-3}\left(-1\right)^{i}\left(N\right)_{i}a_{0,3}^{\left(\lambda\right)}\left(N-i,x\right)\nonumber \\
 & =\left(x-3\right)\sum_{i=0}^{N-3}\left(-1\right)^{i}\left(N\right)_{i}\left(x\right)_{3}\left(-1\right)^{N-i-3}\left(N-i-1\right)!H_{N-i-1,2}\nonumber \\
 & =\left(x\right)_{4}\left(-1\right)^{N-3}N!\sum_{i=0}^{N-3}\frac{H_{N-i-1,2}}{N-i}\nonumber \\
 & =\left(x\right)_{4}\left(-1\right)^{N-3}N!\left\{ \frac{H_{N-1,2}}{N}+\frac{H_{N-2,2}}{N-1}+\cdots+\frac{H_{2,2}}{3}\right\} \nonumber \\
 & =\left(x\right)_{4}\left(-1\right)^{N-3}N!H_{N,3}.\nonumber
\end{align}

Continuing this process, we get
\begin{equation}
a_{0,r}^{\left(\lambda\right)}\left(N+1,x\right)=\left(x\right)_{r}\left(-1\right)^{N-r+1}N!H_{N,r-1}.\label{eq:43}
\end{equation}

For $2\le r\le N$, by (\ref{eq:26}), we get
\begin{align}
 & a_{r,0}^{\left(\lambda\right)}\left(N+1,x\right)\label{eq:44}\\
 & =-ra_{r-1,0}^{\left(\lambda\right)}\left(N,x\right)-Na_{r,0}^{\left(\lambda\right)}\left(N,x\right)\nonumber \\
 & \vdots\nonumber \\
 & =-r\sum_{i=0}^{N-r}\left(-1\right)^{i}\left(N\right)_{i}a_{r-1,0}^{\left(\lambda\right)}\left(N-i,x\right)+\left(-1\right)^{N-r+1}N\left(N-1\right)\cdots ra_{r,0}^{\left(\lambda\right)}\left(r,x\right)\nonumber \\
 & =-r\sum_{i=0}^{N-r}\left(-1\right)^{i}\left(N\right)_{i}a_{r-1,0}^{\left(\lambda\right)}\left(N-i,x\right)+\left(-1\right)^{N-r+1}\left(N\right)_{N-r+1}\left(-1\right)^{r}r!\nonumber \\
 & =-r\sum_{i=0}^{N-r+1}\left(-1\right)^{i}\left(N\right)_{i}a_{r-1,0}^{\left(\lambda\right)}\left(N-i,x\right).\nonumber
\end{align}

Let $r=2$ in (\ref{eq:44}). Then, we have
\begin{align}
a_{2,0}^{\left(\lambda\right)}\left(N+1,x\right) & =-2\sum_{i=0}^{N-1}\left(-1\right)^{i}\left(N\right)_{i}a_{1,0}^{\left(\lambda\right)}\left(N-i,x\right)\label{eq:45}\\
 & =2\left(-1\right)^{N+1}\sum_{i=0}^{N+1}\left(N\right)_{i}\left(N-i-1\right)!\nonumber \\
 & =2\left(-1\right)^{N+1}N!\sum_{i=0}^{N-1}\frac{1}{N-i}\nonumber \\
 & =2\left(-1\right)^{N+1}N!\sum_{i=1}^{N}\frac{1}{i}\nonumber \\
 & =2\left(-1\right)^{N+1}N!H_{N,1}.\nonumber
\end{align}

For $r=3$ in (\ref{eq:44}), by (\ref{eq:45}), we get
\begin{align}
a_{3,0}^{\left(\lambda\right)}\left(N+1,x\right) & =-3\sum_{i=0}^{N-2}\left(-1\right)^{i}\left(N\right)_{i}a_{2,0}^{\left(\lambda\right)}\left(N-i,x\right)\label{eq:46}\\
 & =-3\sum_{i=0}^{N-2}\left(-1\right)^{i}\left(N\right)_{i}2\left(-1\right)^{N-i}\left(N-i-1\right)!H_{N-i-1,1}\nonumber \\
 & =3!\left(-1\right)^{N+1}N!\sum_{i=0}^{N-2}\frac{H_{N-i-1,1}}{N-i}\nonumber \\
 & =3!\left(-1\right)^{N+1}N!H_{N,2}.\nonumber
\end{align}

From (\ref{eq:46}), by $r=4$ in (\ref{eq:44}), we note that
\begin{align}
a_{4,0}^{\left(\lambda\right)}\left(N+1,x\right) & =-4\sum_{i=0}^{N-3}\left(-1\right)^{i}\left(N\right)_{i}a_{3,0}^{\left(\lambda\right)}\left(N-i,x\right)\label{eq:47}\\
 & =\left(-1\right)^{N+1}4!N!\sum_{i=0}^{N-3}\frac{H_{N-i-1,2}}{N-i}\nonumber \\
 & =\left(-1\right)^{N+1}4!N!H_{N,3}.\nonumber
\end{align}

Continuing this process, we get
\begin{equation}
a_{r,0}^{\left(\lambda\right)}\left(N+1,x\right)=\left(-1\right)^{N+1}N!r!H_{N,r-1},\quad\left(2\le r\le N\right).\label{eq:48}
\end{equation}

Let $2\le r\le N$, $1\le i\le r-1$. Then, by (\ref{eq:27}), we
get
\begin{align}
 & a_{i,r-i}^{\left(\lambda\right)}\left(N+1,x\right)\label{eq:49}\\
 & =\left(x+i-r+1\right)a_{i,r-i-1}^{\left(\lambda\right)}\left(N,x\right)-ia_{i-1,r-i}^{\left(\lambda\right)}\left(N,x\right)-Na_{i,r-i}^{\left(\lambda\right)}\left(N,x\right)\nonumber \\
 & =\left(x+i-r+1\right)\left\{ a_{i,r-i-1}^{\left(\lambda\right)}\left(N,x\right)-Na_{i,r-i-1}^{\left(\lambda\right)}\left(N-1,x\right)\right\} \nonumber \\
 & \relphantom =-i\left\{ a_{i-1,r-i}^{\left(\lambda\right)}\left(N,x\right)-Na_{i-1,r-i}^{\left(\lambda\right)}\left(N-1,x\right)\right\} \nonumber \\
 & \relphantom =+\left(-1\right)^{2}N\left(N-1\right)a_{i,r-i}^{\left(\lambda\right)}\left(N-1,x\right)\nonumber \\
 & \vdots\nonumber \\
 & =\left(x+i-r+1\right)\sum_{s=0}^{N-r}\left(-1\right)^{s}\left(N\right)_{s}a_{i,r-i-1}^{\left(\lambda\right)}\left(N-s,x\right)\nonumber\\
 &\relphantom{=}-i\sum_{s=0}^{N-r}\left(-1\right)^{s}\left(N\right)_{s}a_{i-1,r-i}^{\left(\lambda\right)}\left(N-s,x\right)\nonumber \\
 & \relphantom =+\left(-1\right)^{N-r+1}\left(N\right)_{N-r+1}a_{i,r-i}^{\left(\lambda\right)}\left(r,x\right)\nonumber \\
 & =\left(x+i-r+1\right)\sum_{s=0}^{N-r}\left(-1\right)^{s}\left(N\right)_{s}a_{i,r-i-1}^{\left(\lambda\right)}\left(N-s,x\right)\nonumber\\
 &\relphantom{=}-i\sum_{s=0}^{N-r}\left(-1\right)^{s}\left(N\right)_{s}a_{i-1,r-i}^{\left(\lambda\right)}\left(N-s,x\right)\nonumber \\
 & \relphantom =+\left(-1\right)^{N-r+1}\left(N\right)_{N-r+1}\left(-1\right)^{i}i!S_{1,i-1}\left(r-i\right)\left(x\right)_{r-i}.\nonumber
\end{align}

Let $r=2$ in (\ref{eq:49}). Then $i=1$. From (\ref{eq:49}), we
note that
\begin{align}
 & a_{1,1}^{\left(\lambda\right)}\left(N+1,x\right)\label{eq:50}\\
 & =x\sum_{s=0}^{N-2}\left(-1\right)^{s}\left(N\right)_{s}a_{1,0}^{\left(\lambda\right)}\left(N-s,x\right)\nonumber\\
 &\relphantom{=}-\sum_{s=0}^{N-2}\left(-1\right)^{s}\left(N\right)_{s}a_{0,1}^{\left(\lambda\right)}\left(N-s,x\right)\nonumber \\
 & \relphantom =+\left(-1\right)^{N-1}N!\left(-1\right)S_{1,0}\left(1\right)x\nonumber \\
 & =x\sum_{s=0}^{N-2}\left(-1\right)^{s}\left(N\right)_{s}\left(-1\right)^{N-s}\left(N-s-1\right)!\nonumber\\
 &\relphantom{=}-\sum_{s=0}^{N-2}\left(-1\right)^{s}\left(N\right)_{s}\left(-1\right)^{N-s-1}\left(N-s-1\right)!x\nonumber \\
 & \relphantom =+2\left(-1\right)^{N}N!x\nonumber \\
 & =2x\left(-1\right)^{N}\sum_{s=0}^{N-2}\left(N\right)_{s}\left(N-s-1\right)!+2\left(-1\right)^{N}N!x\nonumber \\
 & =2x\left(-1\right)^{N}N!\sum_{s=0}^{N-2}\frac{1}{N-s}+2\left(-1\right)^{N}N!x\nonumber \\
 & =2\left(-1\right)^{N}N!x\left\{ \frac{1}{N}+\frac{1}{N-1}+\cdots+\frac{1}{2}+\frac{1}{1}\right\} \nonumber \\
 & =2\left(-1\right)^{N}N!xH_{N}.\nonumber
\end{align}

Let $r=3$. Then $1\le i\le2.$ From (\ref{eq:49}), we note that

\begin{align}
a_{i,3-i}^{\left(\lambda\right)}\left(N+1,x\right) & =\left(x+i-2\right)\sum_{s=0}^{N-3}\left(-1\right)^{s}\left(N\right)_{s}a_{i,2-i}^{\left(\lambda\right)}\left(N-s,x\right)\label{eq:51}\\
 & \relphantom{=}-i\sum_{s=0}^{N-3}\left(-1\right)^{s}\left(N\right)_{s}a_{i-1,3-i}^{\left(\lambda\right)}\left(N-s,x\right)\nonumber \\
 & \relphantom{=}+\left(-1\right)^{N-2}\left(N\right)_{N-2}\left(-1\right)^{i}i!S_{1,i-1}\left(3-i\right)\left(x\right)_{3-i}.\nonumber
\end{align}

For $i=1$ in (\ref{eq:51}), we have
\begin{align}
 & a_{1,2}^{\left(\lambda\right)}\left(N+1,x\right)\label{eq:52}\\
 & =\left(x-1\right)\sum_{s=0}^{N-3}\left(-1\right)^{s}\left(N\right)_{s}a_{1,1}^{\left(\lambda\right)}\left(N-s,x\right)\nonumber \\
 & \relphantom{=}-\sum_{s=0}^{N-3}\left(-1\right)^{s}\left(N\right)_{s}a_{0,2}^{\left(\lambda\right)}\left(N-s,x\right)\nonumber \\
 & \relphantom{=}+\left(-1\right)^{N-2}\left(N\right)_{N-2}\left(-1\right)S_{1,0}\left(2\right)\left(x\right)_{2}\nonumber \\
 & =\left(x-1\right)\sum_{s=0}^{N-3}\left(-1\right)^{s}\left(N\right)_{s}2\left(-1\right)^{N-s-1}\left(N-s-1\right)!H_{N-s-1,1}x\nonumber \\
 & \relphantom{=}-\sum_{s=0}^{N-3}\left(-1\right)^{s}\left(N\right)_{s}\left(x\right)_{2}\left(-1\right)^{N-s-2}\left(N-s-1\right)!H_{N-s-1,1}\nonumber \\
 & \relphantom{=}+3\left(-1\right)^{N-1}\left(N\right)_{N-2}\left(x\right)_{2}\nonumber \\
 & =3\left(x\right)_{2}\left(-1\right)^{N-1}N!\sum_{s=0}^{N-3}\frac{H_{N-s-1,1}}{N-s}+3\left(x\right)_{2}\left(-1\right)^{N-1}N!\frac{1}{2}\nonumber \\
 & =3\left(x\right)_{2}\left(-1\right)^{N-1}N!\left\{ \frac{H_{N-1,1}}{N}+\frac{H_{N-2,1}}{N-1}+\cdots+\frac{H_{2,1}}{3}+\frac{H_{1,1}}{2}\right\} \nonumber \\
 & =3\left(x\right)_{2}\left(-1\right)^{N-1}N!H_{N,2}.\nonumber
\end{align}

Let $i=2$ in (\ref{eq:51}). From (\ref{eq:52}), we note that

\begin{align*}
a_{2,1}^{\left(\lambda\right)}\left(N+1,x\right) & =x\sum_{s=0}^{N-3}\left(-1\right)^{s}\left(N\right)_{s}a_{2,0}^{\left(\lambda\right)}\left(N-s,x\right)\\
&\relphantom{=}-2\sum_{s=0}^{N-3}\left(-1\right)^{s}\left(N\right)_{s}a_{1,1}^{\left(\lambda\right)}\left(N-s,x\right)\\
 & \relphantom =+\left(-1\right)^{N-2}\left(N\right)_{N-2}\left(-1\right)^{2}2!S_{1,1}\left(1\right)x\\
 & =x\sum_{s=0}^{N-3}\left(-1\right)^{s}\left(N\right)_{s}2\left(-1\right)^{N-s}\left(N-s-1\right)!H_{N-s-1,1}\\
 & \relphantom =-2\sum_{s=0}^{N-3}\left(-1\right)^{s}\left(N\right)_{s}2\left(-1\right)^{N-s-1}\left(N-s-1\right)!H_{N-s-1,1}x\\
 & \relphantom =+\left(-1\right)^{N-2}\left(N\right)_{N-2}\left(-1\right)^{2}3!x\\
 & =6x\left(-1\right)^{N}N!\sum_{s=0}^{N-3}\frac{H_{N-s-1,1}}{N-s}+6x\left(-1\right)^{N}\left(N\right)_{N-2}\\
 & =6x\left(-1\right)^{N}N!\left\{ \frac{H_{N-1,1}}{N}+\cdots+\frac{H_{2,1}}{3}+\frac{H_{1,1}}{2}\right\} \\
 & =6x\left(-1\right)^{N}N!H_{N,2}.
\end{align*}

Therefore, we obtain the following theorem.
\begin{thm}
\label{thm:2} Let $1\le r\le N+1$, $0\le i\le r$. Then we have
\[
a_{i,r-i}^{\left(\lambda\right)}\left(N+1,x\right)=\left(-1\right)^{N+1+i-r}i!S_{1,i-1}\left(r-i\right)N!H_{N,r-1}\left(x\right)_{r-i}.
\]
\end{thm}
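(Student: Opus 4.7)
The plan is to proceed by strong induction on $r = i+(r-i)$, using the recursion (\ref{eq:49}) as the inductive engine and reading the boundary cases off from formulas already established earlier in the section.

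First I would dispose of the boundaries. The base case $r=1$ follows from (\ref{eq:33}) together with the conventions $S_{1,-1}(1)=1$ (from Section~1) and $S_{1,0}(0)=1$ (by iterating (\ref{eq:4})--(\ref{eq:5})). For $2\le r\le N$, the left edge $i=0$ is (\ref{eq:43}) (using $S_{1,-1}(r)=1$) and the right edge $i=r$ is (\ref{eq:48}) (using $S_{1,r-1}(0)=1$). Finally, on the top diagonal $r=N+1$ the claim reduces to (\ref{eq:38}) once one observes that iterating (\ref{eq:3}) gives $H_{N,N}=H_{N-1,N-1}/N=\cdots=1/N!$, so that $N!H_{N,N}=1$.

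For the inductive step with $2\le r\le N$ and $1\le i\le r-1$, I would first absorb the boundary term $(-1)^{N-r+1}(N)_{N-r+1}(-1)^i i!S_{1,i-1}(r-i)(x)_{r-i}$ in (\ref{eq:49}) by extending both summations there to $s=N-r+1$: evaluating $a_{i,r-i-1}^{(\lambda)}(r-1,x)$ and $a_{i-1,r-i}^{(\lambda)}(r-1,x)$ via (\ref{eq:38}), using $(x+i-r+1)(x)_{r-i-1}=(x)_{r-i}$, and applying Lemma \ref{lem:1}, the two extra contributions combine to exactly the missing boundary term. I then plug the inductive hypothesis (at index $r-1$) into each extended sum, simplify $(N)_s(N-s-1)!=N!/(N-s)$, and change variables $k=N-s-1$ to reduce both inner sums to $\sum_{k=r-2}^{N-1}H_{k,r-2}/(k+1)=H_{N,r-1}$ directly from (\ref{eq:3}). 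The two resulting contributions share the common factor $(-1)^{N+1+i-r}i!N!H_{N,r-1}(x)_{r-i}$, multiplied respectively by $S_{1,i-1}(r-i-1)$ and $S_{1,i-2}(r-i)$; a final application of Lemma \ref{lem:1} collapses their sum to $S_{1,i-1}(r-i)$, yielding the formula.

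The main obstacle is the sign and factorial bookkeeping, since the two branches of the recursion produce falling factorials of different lengths, $(x)_{r-i-1}$ and $(x)_{r-i}$, and power sums with different arguments, $S_{1,i-1}(r-i-1)$ and $S_{1,i-2}(r-i)$. The prefactor $(x+i-r+1)$ in (\ref{eq:49}) is exactly what equalizes the factorials, and Lemma \ref{lem:1} is exactly what merges the two power sums; the sample calculations (\ref{eq:50}), (\ref{eq:52}), and the one immediately preceding the theorem already exhibit this pattern in low-index cases.
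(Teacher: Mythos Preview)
Your proposal is correct and follows essentially the same route as the paper's own proof: the same boundary cases $r=1$, $r=N+1$, $i=0$, $i=r$ are read off from (\ref{eq:33}), (\ref{eq:34}), (\ref{eq:38}), (\ref{eq:43}), (\ref{eq:48}), and the inductive step for $2\le r\le N$, $1\le i\le r-1$ is driven by (\ref{eq:49}) together with Lemma~\ref{lem:1}. The only cosmetic difference is that you absorb the boundary term in (\ref{eq:49}) by extending both sums to $s=N-r+1$ before applying the hypothesis, whereas the paper keeps that term separate and recognizes it as $S_{1,i-1}(r-i)\cdot H_{r-2,r-2}/(r-1)$ to complete the harmonic sum; these are the same computation reorganized.
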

\begin{proof}
We showed that it is true for $r=1$ and $r=N+1$. Assume that $2\le r\le N$.
If $i=0$ or $i=r$, then it is also true. So we prove the assertion
by inductino on $r$ when $2\le r\le N$, $1\le i\le r-1$. For $r=2$,
$i=1$, we showed that
\[
a_{1,1}^{\left(\lambda\right)}\left(N+1,x\right)=2\left(-1\right)^{N}N!H_{N,1}x.
\]

Assume now that it is true for $r-1\left(3\le r\le N\right)$.

From \ref{eq:49}, we note that
\begin{align}
&\relphantom{=}a_{i,r-i}^{\left(\lambda\right)}\left(N+1,x\right)\label{eq:53} \\
& =\left(x+i-r+1\right)\sum_{s=0}^{N-r}\left(-1\right)^{s}\left(N\right)_{s}a_{i,r-i-1}^{\left(\lambda\right)}\left(N-s,x\right)\nonumber\\
  & \relphantom =-i\sum_{s=0}^{N-r}\left(-1\right)^{s}\left(N\right)_{s}a_{i-1,r-i}^{\left(\lambda\right)}\left(N-s,x\right)\nonumber\\
 & \relphantom =+\left(-1\right)^{N-r+1}\left(N\right)_{N-r+1}\left(-1\right)^{i}i!S_{1,i-1}\left(r-i\right)\left(x\right)_{r-i}\nonumber \\
 & =\left(x+i-r+1\right)\sum_{s=0}^{N-r}\left(-1\right)^{s}\left(N\right)_{s}\left(-1\right)^{N-s-1+i-r}i!\nonumber\\
 &\relphantom {=}\times S_{1,i-1}\left(r-1-i\right)\left(N-s-1\right)!H_{N-s-1,r-2}\left(x\right)_{r-1-i}\nonumber \\
 & \relphantom {=}-i\sum_{s=0}^{N-r}\left(-1\right)^{s}\left(N\right)_{s}\left(-1\right)^{N-s+i-r}\left(i-1\right)!S_{1,i-2}\left(r-i\right)\nonumber\\
 &\relphantom{=}\times\left(N-s-1\right)!H_{N-s-1,r-2}\left(x\right)_{r-i}\nonumber\\
 & \relphantom {=}+\left(-1\right)^{N-r+1}\left(N\right)_{N-r+1}\left(-1\right)^{i}i!S_{1,i-1}\left(r-i\right)\left(x\right)_{r-i}\nonumber\\
 & =\left(-1\right)^{N+1+i-r}i!N!\left(x\right)_{r-i}\nonumber\\
 &\relphantom{=}\times\left\{ \left(S_{1,i-1}\left(r-1-i\right)+S_{1,i-2}\left(r-i\right)\right)\sum_{s=0}^{N-r}\frac{H_{N-s-1,r-2}}{N-s}+\frac{S_{1,i-1}\left(r-i\right)}{\left(r-1\right)!}\right\} .\nonumber
\end{align}
 By Lemma \ref{lem:1} and (\ref{eq:53}), we get
\begin{align}
a_{i,r-i}^{\left(\lambda\right)}\left(N+1,x\right) & =\left(-1\right)^{N+1+i-r}i!S_{1,i-1}\left(r-i\right)N!\left(x\right)_{r-i}\label{eq:54}\\
&\relphantom{=}\times\left\{ \frac{H_{N-1,r-2}}{N}+\cdots+\frac{H_{r-1,r-2}}{r}+\frac{H_{r-2,r-2}}{r-1}\right\} \nonumber\\
 & =\left(-1\right)^{N+1+i-r}i!S_{1,i-1}\left(r-i\right)N!H_{N,r-1}\left(x\right)_{r-i}.\qedhere\nonumber
\end{align}

\end{proof}
Therefore, we obtain the following theorem.
\begin{thm}
\label{thm:3} The linear differential equations
\begin{align*}
&\relphantom{=}F^{\left(N\right)}\\
&=\lambda^{N}\left(1+\lambda t\right)^{-N}\\
&\relphantom{=}\times\left(\sum_{r=1}^{N}\sum_{i=0}^{r}a_{i,r-i}^{\left(\lambda\right)}\left(N,x\right)\left(2\lambda+\log\left(1+\lambda t\right)\right)^{-i}\left(\lambda+\log\left(1+\lambda t\right)\right)^{-\left(r-i\right)}\right)F
\end{align*}
has a solution
\[
F=F\left(t;x,\lambda\right)=2\lambda\left(2\lambda+\log\left(1+\lambda t\right)\right)^{-1}\left(1+\lambda^{-1}\log\left(1+\lambda t\right)\right)^{x},
\]
where, for $1\le r\le N$, $0\le i\le r$,
\[
a_{i,r-i}^{\left(\lambda\right)}\left(N,x\right)=\left(-1\right)^{N+i-r}i!S_{1,i-1}\left(r-i\right)\left(N-1\right)!H_{N-1,r-1}\left(x\right)_{r-i}.
\]

\end{thm}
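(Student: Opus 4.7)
The plan is to verify in two stages that $F=F(t;x,\lambda)$ satisfies the stated linear differential equation with the claimed coefficients. First I would establish the structural form (\ref{eq:20}) by induction on $N$. The base case $N=1$ follows from the direct differentiation in (\ref{eq:18}) together with the initial values $a_{1,0}^{(\lambda)}(1,x)=-1$ and $a_{0,1}^{(\lambda)}(1,x)=x$ from (\ref{eq:32}). For the induction step, assuming $F^{(N)}$ has the form (\ref{eq:20}), I would differentiate once more using the product rule on each factor $(1+\lambda t)^{-N}$, $(2\lambda+\log(1+\lambda t))^{-i}$, $(\lambda+\log(1+\lambda t))^{-j}$, and $F$, substituting the expression for $F^{(1)}/F$ from (\ref{eq:18}). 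Matching the resulting expansion (\ref{eq:21}) against the ansatz (\ref{eq:22}) produces exactly the recurrences (\ref{eq:23}), (\ref{eq:25})--(\ref{eq:27}), and (\ref{eq:29})--(\ref{eq:30}) for the coefficients $a_{i,j}^{(\lambda)}(N+1,x)$.

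Second, I would invoke Theorem~\ref{thm:2}, whose solution of these recurrences yields the closed form
\[
a_{i,r-i}^{(\lambda)}(N+1,x) = (-1)^{N+1+i-r}\, i!\, S_{1,i-1}(r-i)\, N!\, H_{N,r-1}\, (x)_{r-i}.
\]
To obtain the statement of Theorem~\ref{thm:3}, I would simply replace $N+1$ by $N$: the sign becomes $(-1)^{N+i-r}$, the factor $N!$ becomes $(N-1)!$, and $H_{N,r-1}$ becomes $H_{N-1,r-1}$, reproducing the asserted expression for $a_{i,r-i}^{(\lambda)}(N,x)$ in the range $1\le r\le N$, $0\le i\le r$.

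The main obstacle is really the bookkeeping around boundary indices that lie outside the principal inductive range $2\le r\le N$, $1\le i\le r-1$ of Theorem~\ref{thm:2}. For $r=1$ I would appeal to (\ref{eq:33}); for the corners $(i,r-i)=(r,0)$ and $(0,r)$ with $2\le r\le N$, to (\ref{eq:48}) and (\ref{eq:43}); and for the diagonal $i+j=N+1$, to (\ref{eq:34}) and (\ref{eq:38}). In each case a short check using $H_{N,0}=1$, $S_{1,0}(n)=n+1$, and the convention $S_{1,-1}(n)=1$ confirms that the single closed-form expression specializes correctly, so that the formula in Theorem~\ref{thm:3} uniformly covers every admissible pair $(i,r)$ after the index shift.
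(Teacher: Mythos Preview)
Your proposal is correct and follows essentially the same route as the paper: the paper derives the ansatz \eqref{eq:20}, extracts the recurrences \eqref{eq:23}, \eqref{eq:25}--\eqref{eq:27}, \eqref{eq:29}--\eqref{eq:30} by differentiating and comparing, solves them via the special cases \eqref{eq:33}--\eqref{eq:48} and Theorem~\ref{thm:2}, and then simply states Theorem~\ref{thm:3} as the $N\!+\!1\mapsto N$ relabeling. Your handling of the boundary indices via \eqref{eq:33}, \eqref{eq:34}, \eqref{eq:38}, \eqref{eq:43}, \eqref{eq:48} together with the conventions $H_{N,0}=1$, $S_{1,0}(n)=n+1$, $S_{1,-1}(n)=1$ is exactly how the paper dispatches those cases inside the proof of Theorem~\ref{thm:2}.
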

Recall that the $\lambda$-Changhee polynomials, $\Ch_{n,\lambda}\left(x\right)$,
$\left(n\ge0\right)$, are given by the generating function
\begin{align}
F & =F\left(t;x,\lambda\right)\label{eq:55}\\
 & =\frac{2\lambda}{2\lambda+\log\left(1+\lambda t\right)}\left(1+\frac{1}{\lambda}\log\left(1+\lambda t\right)\right)^{x}\nonumber \\
 & =\sum_{n=0}^{\infty}\Ch_{n,\lambda}\left(x\right)\frac{t^{n}}{n!}.\nonumber
\end{align}

Thus, by (\ref{eq:55}), we get
\begin{align}
F^{\left(N\right)} & =\left(\frac{d}{dt}\right)^{N}F\left(t;x,\lambda\right)\label{eq:56}\\
 & =\sum_{k=0}^{\infty}\Ch_{k+N,\lambda}\left(x\right)\frac{t^{k}}{k!}.\nonumber
\end{align}

On the other hand, by Theorem \ref{thm:3}, we get
\begin{align}
 & F^{\left(N\right)}\label{eq:57}\\
 & =\lambda^{N}\sum_{l=0}^{\infty}\left(-1\right)^{l}\binom{N+l-1}{l}\lambda^{l}t^{l}\sum_{r=1}^{N}\sum_{i=0}^{r}a_{i,r-i}^{\left(\lambda\right)}\left(N,x\right)\sum_{m=0}^{\infty}\left(-1\right)^{m}\binom{i+m-1}{m}\nonumber \\
 & =\left(2\lambda\right)^{-i-m}\left(\log\left(1+\lambda t\right)\right)^{m}\sum_{n=0}^{\infty}\left(-1\right)^{n}\binom{r+n-i-1}{n}\lambda^{-\left(r-i\right)-n}\left(\log\left(1+\lambda t\right)\right)^{n}\nonumber \\
 & \relphantom =\times\sum_{s=0}^{\infty}\Ch_{s,\lambda}\left(x\right)\frac{t^{s}}{s!}\nonumber \\
 & =\lambda^{N}\sum_{l=0}^{\infty}\left(-1\right)^{l}\left(N+l-1\right)_{l}\lambda^{l}\frac{t^{l}}{l!}\nonumber\\
 &\relphantom{=}\times\sum_{r=1}^{N}\sum_{i=0}^{r}a_{i,N-i}^{\left(\lambda\right)}\left(N,x\right)\sum_{m=0}^{\infty}\left(-1\right)^{m}\left(i+m-1\right)_{m}\left(2\lambda\right)^{-i-m}\nonumber \\
  &\relphantom{=}\times\sum_{e=0}^{\infty}S_{1}\left(e+m,m\right)\lambda^{e+m}\frac{t^{e+m}}{\left(e+m\right)!}\sum_{n=0}^{\infty}\left(-1\right)^{n}\left(r+n-i-1\right)_{n}\nonumber\\
 &\relphantom{=}\times\lambda^{i-r-n}\sum_{f=0}^{\infty}S_{1}\left(f+n,n\right)\nonumber \\
 & \relphantom =\times\lambda^{f+n}\frac{t^{f+n}}{\left(f+n\right)!}\sum_{s=0}^{\infty}\Ch_{s,\lambda}\left(x\right)\frac{t^{s}}{s!}\nonumber \\
 & =\lambda^{N}\sum_{r=1}^{N}\sum_{i=0}^{r}a_{i,r-i}^{\left(\lambda\right)}\left(N,x\right)\sum_{m=0}^{\infty}\left(-1\right)^{m}\left(i+m-1\right)_{m}\left(2\lambda\right)^{-i-m}\lambda^{m}\frac{t^{m}}{m!}\nonumber \\
 & \relphantom =\times\sum_{n=0}^{\infty}\left(-1\right)^{n}\left(r+n-i-1\right)_{n}\lambda^{i-r-n}\lambda^{n}\frac{t^{n}}{n!}\sum_{l=0}^{\infty}\left(-1\right)^{l}\left(N+l-1\right)_{l}\lambda^{l}\frac{t^{l}}{l!}\nonumber \\
 & \relphantom =\times\sum_{e=0}^{\infty}S_{1}\left(e+m,m\right)\lambda^{e}\frac{e!m!}{\left(e+m\right)!}\frac{t^{e}}{e!}\sum_{f=0}^{\infty}S_{1}\left(f+n,n\right)\lambda^{f}\frac{f!n!}{\left(f+n\right)!}\frac{t^{f}}{f!}\nonumber \\
 & \relphantom =\times\sum_{s=0}^{\infty}\Ch_{s,\lambda}\left(x\right)\frac{t^{s}}{s!}\nonumber\\
 & =\lambda^{N}\sum_{r=1}^{N}\sum_{i=0}^{r}a_{i,r-i}^{\left(\lambda\right)}\left(N,x\right)\sum_{m=0}^{\infty}\left(-1\right)^{m}\nonumber\\
 &\relphantom{=}\times\left(i+m-1\right)_{m}\left(2\lambda\right)^{-i-m}\lambda^{m}\frac{t^{m}}{m!}\sum_{n=0}^{\infty}\left(-1\right)^{n}\left(r+n-i-1\right)_{n}\nonumber \\
 & \relphantom =\times\lambda^{i-r-n}\lambda^{n}\frac{t^{n}}{n!}\sum_{a=0}^{\infty}\left(\sum_{l+e+f+s=a}\left(-1\right)^{l}\lambda^{a-s}\frac{\binom{a}{l,e,f,s}}{\binom{e+m}{m}\binom{f+n}{n}}\left(N+l-1\right)_{l}\right.\nonumber\\
 &\relphantom{=}\left.S_{1}\left(e+m,m\right)S_{1}\left(f+n,n\right)\Ch_{s,\lambda}\left(x\right)\right)\frac{t^{a}}{a!}\nonumber \\
 & =\lambda^{N}\sum_{r=1}^{N}\sum_{i=0}^{r}a_{i,r-i}^{\left(\lambda\right)}\left(N,x\right)\lambda^{-r}2^{-i}\sum_{k=0}^{\infty}\left(\sum_{m+n+a=k}\binom{k}{m,n,a}\left(-\frac{1}{2}\right)^{m}\left(-1\right)^{n}\left(i+m-1\right)_{m}\right.\nonumber \\
 & \relphantom =\times\left(r+n-i-1\right)_{n}\sum_{l+e+f+s=a}\left(-1\right)^{l}\lambda^{a-s}\frac{\binom{a}{l,e,f,s,}}{\binom{e+m}{m}\binom{f+n}{n}}\nonumber\\
 &\relphantom{=}\times \left(N+l-1\right)_{l}S_{1}\left(e+m,m\right)\left. S_{1}\left(f+n,n\right)\Ch_{s,\lambda}\left(x\right)\right)\frac{t^{k}}{k!}\nonumber \\
 & =\sum_{k=0}^{\infty}\left\{ \lambda^{N}\sum_{r=1}^{N}\sum_{i=0}^{r}a_{i,r-i}^{\left(\lambda\right)}\left(N,x\right)\lambda^{-r}2^{-i}\right.\nonumber \\
 & \relphantom =\times\sum_{m+n+a=k}\binom{k}{m,n,a}\left(-\frac{1}{2}\right)^{m}\left(-1\right)^{n}\left(i+m-1\right)_{m}\left(r+n-i-1\right)_{n}\nonumber \\
 & \relphantom =\times\sum_{l+e+f+s=a}\left(-1\right)^{l}\lambda^{a-s}\frac{\binom{a}{l,e,f,s}}{\binom{e+m}{m}\binom{f+n}{n}}\left(N+l-1\right)_{l}\nonumber \\
 & \relphantom =\left.\times S_{1}\left(e+m,m\right)S_{1}\left(f+n,n\right)\Ch_{s,\lambda}\left(x\right)\right\} \frac{t^{k}}{k!}.\nonumber
\end{align}

Therefore, by comparing the coefficients on both sides of (\ref{eq:56})
and (\ref{eq:57}), we obtain the following theorem.
\begin{thm}
\label{thm:4} For $N\in\mathbb{N},$ and $k\in\mathbb{N}\cup\left\{ 0\right\} $,
we have
\begin{align*}
 & \Ch_{k+N,\lambda}\left(x\right)\\
 & =\lambda^{N}\sum_{r=1}^{N}\sum_{i=0}^{r}a_{i,r-i}^{\left(\lambda\right)}\left(N,x\right)\lambda^{-r}2^{-i}\sum_{m+n+a=k}\binom{k}{m,n,a}\\
 & \relphantom =\times\left(-\frac{1}{2}\right)^{m}\left(-1\right)^{n}\left(i+m-1\right)_{m}\left(r+n-i-1\right)_{n}\\
 & \relphantom =\times\sum_{l+e+f+s=a}\left(-1\right)^{l}\lambda^{a-s}\frac{\binom{a}{l,e,f,s}}{\binom{e+m}{m}\binom{f+n}{n}}\\
 & \relphantom =\times\left(N+l-1\right)_{l}S_{1}\left(e+m,m\right)S_{1}\left(f+n,n\right)\Ch_{s,\lambda}\left(x\right),
\end{align*}
where, for $1\le r\le N$, $0\le i\le r$,
\[
a_{i,r-i}^{\left(\lambda\right)}\left(N,x\right)=\left(-1\right)^{N+i-r}i!S_{1,i-1}\left(r-i\right)\left(N-1\right)!H_{N-1,r-1}\left(x\right)_{r-i}.
\]

\end{thm}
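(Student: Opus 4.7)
The plan is to derive two power series expansions for $F^{(N)}$ in the variable $t$ and then match coefficients of $t^{k}/k!$ on both sides. On the one hand, directly differentiating the defining generating function \eqref{eq:55} gives
\[
F^{(N)} = \sum_{k=0}^{\infty}\Ch_{k+N,\lambda}(x)\frac{t^{k}}{k!},
\]
which furnishes the left-hand side of the claimed identity. On the other hand, Theorem \ref{thm:3} represents $F^{(N)}$ as an explicit sum over $r$ and $i$ of products of four factors: $(1+\lambda t)^{-N}$, $(2\lambda+\log(1+\lambda t))^{-i}$, $(\lambda+\log(1+\lambda t))^{-(r-i)}$, and $F$ itself, multiplied by the coefficient $a_{i,r-i}^{(\lambda)}(N,x)$ given in Theorem \ref{thm:2}.

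The next step is to expand each of these four factors as a power series in $t$. First, the binomial series gives
\[
(1+\lambda t)^{-N} = \sum_{l=0}^{\infty}(-1)^{l}(N+l-1)_{l}\lambda^{l}\frac{t^{l}}{l!}.
\]
Next, factoring $2\lambda$ and $\lambda$ out of the logarithmic factors and applying the binomial series again yields
\[
(2\lambda+\log(1+\lambda t))^{-i} = (2\lambda)^{-i}\sum_{m=0}^{\infty}(-1)^{m}(i+m-1)_{m}(2\lambda)^{-m}(\log(1+\lambda t))^{m},
\]
and similarly for the $(\lambda+\log(1+\lambda t))^{-(r-i)}$ term with index $n$. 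Each occurrence of $(\log(1+\lambda t))^{m}$ (and $(\log(1+\lambda t))^{n}$) is then expanded through the Stirling number generating function \eqref{eq:11}, introducing summation indices $e$ and $f$ respectively. Finally, $F$ itself is replaced by its defining expansion $\sum_{s\ge0}\Ch_{s,\lambda}(x)t^{s}/s!$, introducing index $s$.

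The third step is to assemble the product. The six series in $t$ (indexed by $l,m,n,e,f,s$) combine into a single Cauchy product. I would first fuse the $e$-series with $m$ and the $f$-series with $n$ via the identity $\frac{t^{e+m}}{(e+m)!} = \binom{e+m}{m}^{-1}\frac{t^{e}}{e!}\frac{t^{m}}{m!}$, so that each of the six series is normalized as $(\text{stuff})_{j}t^{j}/j!$. Then the standard multinomial Cauchy product gives the coefficient of $t^{k}/k!$ as the sum over $l+m+n+e+f+s=k$; reorganizing by peeling off $m,n$ (which retain $\lambda$ through the $(2\lambda)^{-i-m}$ and $\lambda^{i-r-n}$ prefactors) and bundling $l,e,f,s$ into an inner multinomial sum of size $a=k-m-n$ produces exactly the nested sum in the statement. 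Matching with the first expansion yields the claim.

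The main obstacle is purely combinatorial bookkeeping: tracking the powers of $\lambda$ and $2$ that come from each factor, keeping the binomial reciprocals $\binom{e+m}{m}^{-1}\binom{f+n}{n}^{-1}$ that arise from splitting $t^{e+m}/(e+m)!$ and $t^{f+n}/(f+n)!$, and reindexing the six-fold Cauchy convolution as a triple sum in $(m,n,a)$ with an inner four-fold multinomial sum in $(l,e,f,s)$. Once this reindexing is executed cleanly, the claimed identity reads off immediately from the coefficient comparison, with the coefficients $a_{i,r-i}^{(\lambda)}(N,x)$ supplied by Theorem \ref{thm:2}.
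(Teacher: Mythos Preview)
Your proposal is correct and follows essentially the same route as the paper: expand $F^{(N)}$ once by differentiating the generating function \eqref{eq:55}, and once via Theorem~\ref{thm:3} by applying binomial series to $(1+\lambda t)^{-N}$, $(2\lambda+\log(1+\lambda t))^{-i}$, $(\lambda+\log(1+\lambda t))^{-(r-i)}$, expanding powers of $\log(1+\lambda t)$ through \eqref{eq:11}, and inserting the series for $F$; then collect the six-fold Cauchy product into the nested $(m,n,a)$ and $(l,e,f,s)$ multinomial sums and compare coefficients. The indexing, the splitting $t^{e+m}/(e+m)!=\binom{e+m}{m}^{-1}(t^{e}/e!)(t^{m}/m!)$, and the tracking of the $\lambda$ and $2$ powers all match the paper's computation in \eqref{eq:57}.
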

\bibliographystyle{amsplain}
\providecommand{\bysame}{\leavevmode\hbox to3em{\hrulefill}\thinspace}
\providecommand{\MR}{\relax\ifhmode\unskip\space\fi MR }
\providecommand{\MRhref}[2]{%
  \href{http://www.ams.org/mathscinet-getitem?mr=#1}{#2}
}
\providecommand{\href}[2]{#2}

\end{document}